\title{On winning strategies for Banach-Mazur games}
\author{
Anumat Srivastava \\
University of California, Los Angeles\\
}
\date{March 25, 2015}
\newtheorem{theorem}{Theorem}[section]
\newtheorem{corollary}[theorem]{Corollary}
\newtheorem{lemma}[theorem]{Lemma}
\theoremstyle{definition}
\newtheorem{definition}[theorem]{Definition}
\theoremstyle{remark}
\newtheorem*{remark}{Remark}
\begin{document}
\maketitle

\begin{abstract}
We give topological and game theoretic definitions and theorems necessary for defining a Banach-Mazur game, and apply these definitions to formalize the game.
We then state and prove two theorems which give necessary conditions for existence of winning strategies for players in a Banach-Mazur game.
\end{abstract}

\section{Introduction}
In this section we formulate and prove our main theorems. We assume preliminary knowledge of open and closed sets, closures of sets, and complete metric spaces.
\begin{definition}
A subset $T$ of a metric space $X$ is said to be \textit{dense} in $X$ if $\overline{T} = X$, that is, if $$T \bigcup \partial T= X$$
\end{definition}
\vspace{1pt}
\begin{theorem}[Baire Category Theorem]
\label{category}
Let $\{ U_n \}_{n = 1}^\infty$ be a sequence of dense open subsets of a complete metric space $X$. Then, $\bigcap_{n = 1}^\infty U_n$ is also dense in $X$.
\end{theorem}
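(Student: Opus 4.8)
The plan is to prove density of $\bigcap_{n=1}^\infty U_n$ directly from the definition: I will show that every nonempty open subset $W \subseteq X$ meets $\bigcap_{n=1}^\infty U_n$. Since every nonempty open set contains an open ball, it suffices to treat the case where $W$ is an open ball. The engine of the argument is the completeness of $X$, exploited through a nested sequence of closed balls whose radii shrink to $0$.

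First I would fix a nonempty open set $W$. Since $U_1$ is dense, $W \cap U_1 \neq \emptyset$, and since $U_1$ is open, $W \cap U_1$ is open; hence it contains an open ball, and by shrinking its radius I can choose $x_1$ and $0 < r_1 < 1$ with $\overline{B(x_1, r_1)} \subseteq W \cap U_1$. Proceeding recursively, given $x_n$ and $r_n$, the set $B(x_n, r_n) \cap U_{n+1}$ is nonempty (density of $U_{n+1}$) and open (openness of $U_{n+1}$), so I can pick $x_{n+1}$ and $0 < r_{n+1} < 2^{-n}$ with $\overline{B(x_{n+1}, r_{n+1})} \subseteq B(x_n, r_n) \cap U_{n+1}$. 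This produces nested closed balls $\overline{B(x_1,r_1)} \supseteq \overline{B(x_2,r_2)} \supseteq \cdots$ with $r_n \to 0$ and $\overline{B(x_n, r_n)} \subseteq U_n$ for every $n$.

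Next I would check that $(x_n)$ is Cauchy: for $m \geq n$ we have $x_m \in B(x_n, r_n)$, so $d(x_m, x_n) < r_n \to 0$. By completeness of $X$ the sequence converges to some $x \in X$. For each fixed $n$, all but finitely many terms $x_m$ lie in the closed set $\overline{B(x_n, r_n)}$, so the limit $x$ lies in $\overline{B(x_n, r_n)} \subseteq U_n$; likewise $x \in \overline{B(x_1, r_1)} \subseteq W$. Therefore $x \in W \cap \bigcap_{n=1}^\infty U_n$, so this intersection meets $W$, and since $W$ was an arbitrary nonempty open set, $\bigcap_{n=1}^\infty U_n$ is dense in $X$.

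I expect the main obstacle — really the only delicate point — to be the bookkeeping in the recursive construction: at each stage one must justify that an open set contains the closure of a ball of prescribed small radius, which uses the metric-space fact that $\overline{B(x, r/2)} \subseteq B(x, r)$ together with the observation that nonempty open sets contain balls. Once the nested sequence of closed balls is in hand, the appeal to completeness and the verification that the limit lies in each $U_n$ are routine.
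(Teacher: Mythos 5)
Your proposal is correct and follows essentially the same argument as the paper: recursively nest closed balls $\overline{B(x_{n+1},r_{n+1})} \subseteq B(x_n,r_n) \cap U_{n+1}$ with radii shrinking to $0$, use completeness to extract a limit, and check the limit lies in every $U_n$ and in the original open set. The only cosmetic differences are that you quantify over an arbitrary nonempty open set $W$ where the paper fixes a point $x$ and a ball $B(x;\epsilon)$, and your radius bound is $2^{-n}$ rather than $1/n$.
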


\begin{proof}
Let $x \in X$ and let $\epsilon > 0$. It suffices to find $y \in B(x; \epsilon)$ that belongs to $\bigcap_{n = 1}^\infty U_n$. Indeed, then every open ball in $X$ meets $\bigcap_n U_n$, so that $\bigcap_n U_n$ is dense in $X$.

Since $U_1$ is dense in $X$, there exists $y_1 \in U_1$ such that $d(x, y_1) < \epsilon$. Since $U_1$ is open, there exists a $r_1 > 0$ such that $B(y_1; r_1) \subset U_1$. By shrinking $r_1$ we can arrange that $r_1 < 1$, and $\overline{B(y_1; r_1)} \subset U_1 \bigcap B(x; \epsilon)$. The same argument, with $B(y_1; r_1)$ replacing $B(x; \epsilon)$, produces $y_2 \in X$ and $0 < r_2 < 1/2$ such that $\overline{B(y_2; r_2)} \subset U_2 \bigcap B(y_1; r_1)$.
\begin{figure}
\centering
\includegraphics[scale=.45]{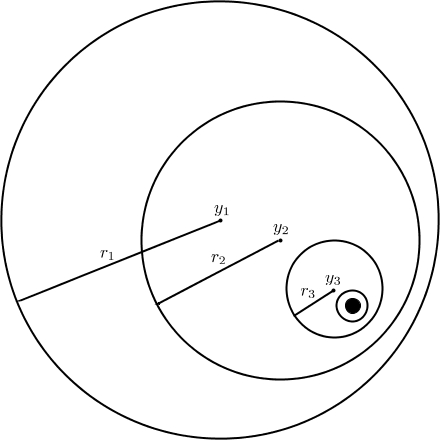}
\end{figure}

Continuing in this manner, we obtain a sequence $\{ y_n \}_{n = 1}^\infty$ in $X$ and a sequence $\{ r_n \}_{n = 1}^\infty$ or radii such that $0 < r_n < 1$ and
\begin{align}
\overline{B(y_n; r_n)} \subseteq U_n \bigcap B(y_{n-1}; r_{n-1})
\end{align}
It follows that,
\begin{align}
\overline{B(y_n; r_n)} \subset B(y_{n-1}; r_{n-1}) \subset \cdots \subset B(y_{1}; r_{1}) \subset B(x; \epsilon) 
\end{align}
The nesting property $(2)$ shows that $y_m \in B(y_n; r_n)$ if $m > n$, so that $d(y_m, y_n) < r_n \rightarrow 0 $ as $m, n \rightarrow 0$. Consequently $\{ y_m \}$ is a Cauchy sequence. Since $X$ is complete, there exists $y \in X$ such that $y_m \rightarrow y$. Since $y_m \in B(y_n; r_n)$ for $m > n$, we obtain $y \in \overline{B(y_n; r_n)}$. By $(2)$ $y \in B(x; \epsilon)$ and by $(1)$, $y \in U_n$; this is for all $n$, so that $y \in \bigcap_{n = 1}^\infty U_n$.
\end{proof}
\vspace{1pt}

\begin{definition}
A subset $Y$ of a metric space $X$ is said to be \textit{nowhere dense} if $\overline{Y}$ has no interior points, that is, if
\begin{align*}
\text{int}(Y) = \phi
\end{align*}
\end{definition}
\vspace{1pt}

\begin{corollary}
Let $\{ E_n \}_{n = 1}^\infty$ be a sequence of nowhere dense subsets of a complete metric space $X$. Then $\bigcup_{n = 1}^\infty E_n$ has empty interior.
\end{corollary}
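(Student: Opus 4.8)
The plan is to dualize the statement and apply the Baire Category Theorem (Theorem~\ref{category}). For each $n$ set $U_n = X \setminus \overline{E_n}$. Since $\overline{E_n}$ is closed, each $U_n$ is open, so the only hypothesis of Theorem~\ref{category} left to verify is that each $U_n$ is dense in $X$.

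To establish density of $U_n$ I would argue straight from the definition of nowhere dense. Because $\overline{E_n}$ has empty interior, it contains no open ball; hence for any $x \in X$ and any $\epsilon > 0$ the ball $B(x;\epsilon)$ is not contained in $\overline{E_n}$, so $B(x;\epsilon)$ contains a point of $U_n$. Thus every open ball meets $U_n$, which says exactly that $\overline{U_n} = X$. This elementary point-set fact — that the complement of a closed set with empty interior is dense — is the only place the definition of ``nowhere dense'' is used, and it is where I would be most careful, especially since the displayed definition in the paper abbreviates $\mathrm{int}(\overline{E_n})$ as $\mathrm{int}(E_n)$.

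With that in hand, Theorem~\ref{category} applies to $\{U_n\}_{n=1}^\infty$ and gives that $\bigcap_{n=1}^\infty U_n$ is dense in $X$. By De Morgan's law $\bigcap_{n=1}^\infty U_n = X \setminus \bigcup_{n=1}^\infty \overline{E_n}$, and since $E_n \subseteq \overline{E_n}$ this is contained in $X \setminus \bigcup_{n=1}^\infty E_n$; hence $X \setminus \bigcup_{n=1}^\infty E_n$ contains a dense set and so is dense. Finally I would finish by contradiction: if $\bigcup_{n=1}^\infty E_n$ had an interior point it would contain some open ball $B$, and then $B$ would be disjoint from the dense set $X \setminus \bigcup_{n=1}^\infty E_n$, which is impossible. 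Therefore $\mathrm{int}\!\left(\bigcup_{n=1}^\infty E_n\right) = \emptyset$. I do not anticipate any genuine obstacle here — the argument is essentially the contrapositive repackaging of Baire's theorem — the only real care being the density-of-complement step noted above.
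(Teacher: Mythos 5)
Your proposal is correct and follows exactly the route the paper takes: it applies Theorem~\ref{category} to the dense open sets $U_n = X \setminus \overline{E_n}$. The paper leaves all the details to the reader in a one-line proof, so your version simply fills in (correctly) the density-of-complement step, the De Morgan computation, and the final contradiction.
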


\begin{proof}
The proof is a simple application of the Baire Category Theorem to the dense open subsets $U_n = X \backslash \overline{E_n}$
\end{proof}
\vspace{1pt}

\begin{remark}
An explanation of the nomenclature is in order. A subset of a metric space $X$ is said to be of the \textit{first category} (or \textit{meagre}) if it is the countable union of nowhere dense subsets. A subset that is not of the first category is said to be of the \textit{second category}. The complement of a meagre set is a \textit{comeagre set}.\\
Note: Second category is \textit{not} equivalent to comeagre – a set of second category may be neither meagre nor comeagre.
\end{remark}
\vspace{1pt}

\begin{definition}
A topological space $X$ is said to be a \textit{Baire space} if for every countable family $\left\{U_0, U_1, U_2, \dots \right\}$ of open and dense subsets of $X$, the intersection $\bigcap \limits_{n=0}^\infty U_n$ is dense in $X$ (equivalently if every nonempty open subset of $X$ is of second category in $X$).
\end{definition}
\vspace{1pt}
\begin{remark}
By the Baire category theorem, every complete metric space is a Baire space.
\end{remark}
\vspace{1pt}

\begin{definition}
In the mathematical study of combinatorial games, \textit{positional games} are games described by a finite set of positions in which a move consists of claiming a previously-unclaimed position.\\
Formally, a positional game is a pair $(X, \mathcal{F})$, where $X$ is a finite set of positions and $\mathcal{F}$ is a family of subsets of $X$; $X$ is called the board and the sets in $\mathcal{F}$ are called winning sets.
\end{definition}
\vspace{1pt}

\begin{definition}
In game theory, an extensive-form game has \textit{perfect information} if each player, when making any decision, is perfectly informed of all the events that have previously occurred.
\end{definition}
\vspace{1pt}

\section{Banach-Mazur Game}
We are now ready to formalize a Banach-Mazur Game.\\
\begin{definition}
A \textit{Banach-Mazur game} is a infinite positional game of perfect information played on a topological space between two players. Let $X$ be a topological space. We define the game $BM(X)$ with two players, Alice and Bob. They take turn choosing nested decreasing nonempty open subsets of $X$ as follows:
\begin{itemize}
\item Bob goes first by choosing a nonempty open subset $U_0$ of $X$. \footnote{
It is usual to assign the first move to Bob and seek a winning strategy for Alice.}
\item Alice then chooses a nonempty open subset $V_0 \subset U_0$.
\item At the $n$th play, where $n \ge 1$, Bob chooses an open set $U_n \subset V_{n-1}$ and Alice chooses an open set $V_n \subset U_n$.
\end{itemize}
Bob wins if $\bigcap \limits_{n=0}^\infty V_n \ne \phi$. Otherwise Alice wins.
\end{definition}
\section{Strategies in the Banach-Mazur Game}
We are interested in the following question:
\begin{itemize}
\item By making their moves carefully, can a player ensure that they will always win, irrespective of the moves the other player makes?
\end{itemize}
If the answer is yes, the player is said to have a winning strategy.\\

We define the strategy for a player to be a function $\sigma$ such that $U_0=\sigma(\phi)$ (the first move) and for each partial play of the game $(n \ge 1)$
$$U_0, V_0, U_1, V_1, \dots, U_{n-1}, V_{n-1}, \hfill (*)$$
$U_n=\sigma(U_0, V_0, U_1, V_1, \dots, U_{n-1}, V_{n-1})$ is a nonempty open set such that $U_n \subset V_{n-1}$.\\
We adopt the convention that a strategy for a player in a game depends only on the moves of the other player. Thus for the partial play of the Banach-Mazur game denoted by $(*)$ above, $U_n = \sigma(V_0, V_1, \dots, V_{n-1})$.\\

We shall now state two theorems which give us constraints for when Alice and Bob will have winning strategies. Evidently, who wins the game depends on the kind of topological space the game is being played on.
\begin{theorem}
Let $BM(X)$ be a Banach-Mazur game on a topological space $X$.
If $X$ is meagre, then Alice has a winning strategy for $BM(X)$.
\end{theorem}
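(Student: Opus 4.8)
The plan is to exploit the hypothesis directly. Since $X$ is meagre, write $X = \bigcup_{n=0}^{\infty} E_n$ where each $E_n$ is nowhere dense in $X$ (re-indexing the decomposition to start at $0$ so the indices line up with the rounds of the game). The guiding idea: have Alice, at the $n$-th round, shrink Bob's open set so that it misses $\overline{E_n}$. Then every point of the board gets excluded at some stage, so no point can lie in $\bigcap_n V_n$, and Alice wins.

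Concretely, I would define Alice's strategy $\tau$ as follows. When Bob has just played a nonempty open set $U_n$ at round $n$, Alice responds with
$$V_n = U_n \setminus \overline{E_n} = U_n \cap (X \setminus \overline{E_n}).$$
This depends only on Bob's move $U_n$ (and on $n$, which is recoverable from the length of the partial play), so it respects the convention that a strategy depends only on the opponent's moves. The first thing to check is that $\tau$ always yields a legal move: $X \setminus \overline{E_n}$ is open because $\overline{E_n}$ is closed, so $V_n$ is open, and clearly $V_n \subseteq U_n$.

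The one genuinely nontrivial point — and the place the hypothesis is really used — is that $V_n \neq \emptyset$. Here I would invoke nowhere density of $E_n$: by definition $\operatorname{int}(\overline{E_n}) = \emptyset$, so the nonempty open set $U_n$ cannot be contained in $\overline{E_n}$; hence $U_n \setminus \overline{E_n} \neq \emptyset$. I expect this step to be the main obstacle in the sense that it is the only step with mathematical content; once it is in hand, the rest is bookkeeping.

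Finally I would verify that $\tau$ is winning. Let $U_0, V_0, U_1, V_1, \dots$ be any play of $BM(X)$ in which Alice follows $\tau$. By construction $V_n \cap \overline{E_n} = \emptyset$ for every $n$, hence
$$\bigcap_{n=0}^{\infty} V_n \ \subseteq\ \bigcap_{n=0}^{\infty}\bigl(X \setminus \overline{E_n}\bigr)\ \subseteq\ \bigcap_{n=0}^{\infty}\bigl(X \setminus E_n\bigr)\ =\ X \setminus \bigcup_{n=0}^{\infty} E_n\ =\ X \setminus X\ =\ \emptyset.$$
So $\bigcap_n V_n = \emptyset$ no matter how Bob plays, which by definition means Alice wins. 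Therefore $\tau$ is a winning strategy for Alice in $BM(X)$, completing the proof.
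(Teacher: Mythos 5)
Your proof is correct, and it is in fact more careful and more general than the proof given in the paper. Both arguments share the same diagonalization idea: pair the rounds of the game with the pieces of a meagre decomposition of $X$ and have Alice delete the $n$-th piece at round $n$, so that nothing survives in $\bigcap_n V_n$. But the paper instantiates this with the decomposition $X = \bigcup_{n=1}^\infty \{x_n\}$ into nowhere dense \emph{singletons}, which silently assumes $X$ is countable (and, for $V_n = U_n \setminus \{x_j\}$ to be open, that points are closed); a meagre space need not be countable, so the paper's proof as written only covers a special case. You instead take the correct general decomposition $X = \bigcup_n E_n$ into arbitrary nowhere dense sets and remove the \emph{closure} $\overline{E_n}$ at each stage, which is exactly what is needed to keep $V_n$ open, and you correctly isolate the one substantive point, namely that $V_n \neq \emptyset$ because $\operatorname{int}(\overline{E_n}) = \emptyset$ forbids $U_n \subseteq \overline{E_n}$. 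The only cosmetic caveat is the dependence of your strategy on the round number $n$: strictly speaking the strategy is a function of the partial play $(U_0, \dots, U_n)$, from which $n$ is recovered as the length, as you note; this is consistent with the paper's convention. In short, your argument is the standard and fully general one, and it repairs the gaps in the paper's own proof rather than merely reproducing it.
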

\begin{proof}
Suppose $X$ is meagre, that is, $X = \bigcup_{n = 1}^\infty \{ x_n \}$, where each $\{ x_i \}$ is nowhere dense.\\
$U_0$ is the first set chosen by Bob. Alice can pick a subset $V_0 = U_{0} \backslash \{ x_i \}$ for some $i \in \mathbb{N}$. Then, Bob will pick some other subset $U_1$ of $V_0$ and Alice can pick $V_1 = U_1 \backslash \{ x_j \}$.\\
Continuing in this way, each point $x_n$ will be excluded by the set $V_{n}$, and $\bigcap_{n = 1}^\infty V_n = \phi$. Thus Alice wins.
\end{proof}
\vspace{1pt}

We now state a lemma which will help us establish a similar result for Bob, that is, if $X$ is a Baire space, Bob has a winning strategy for $BM(X)$.

\begin{lemma}
Let $X$ be a space. Let $O \subset X$ be a nonempty open set. Let $\tau$ be the set of all nonempty open subsets of $O$. Let $f: \tau \rightarrow \tau$ be a function such that for each $V \in \tau$, $f(V) \subset V$. Then there exists a disjoint collection $\mathcal{U}$ consisting of elements of $f(\tau)$ such that $\bigcup \mathcal{U}$ is dense in $O$.
\end{lemma}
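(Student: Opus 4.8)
The natural approach is a maximality (Zorn's Lemma) argument. Consider the collection $\mathcal{P}$ of all subfamilies $\mathcal{W} \subseteq f(\tau)$ that are pairwise disjoint, i.e.\ each member of $\mathcal{W}$ has the form $f(V)$ for some $V \in \tau$ and any two distinct members of $\mathcal{W}$ have empty intersection. Partially order $\mathcal{P}$ by inclusion. The empty family lies in $\mathcal{P}$, so $\mathcal{P} \ne \phi$. Given a chain $\mathcal{C} \subseteq \mathcal{P}$, I would take $\mathcal{W}^* = \bigcup \mathcal{C}$ as a candidate upper bound; it is clearly a subfamily of $f(\tau)$, and it is pairwise disjoint because any two of its members already belong to a common member of the chain, where disjointness is known. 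Hence every chain has an upper bound, and Zorn's Lemma yields a maximal element $\mathcal{U} \in \mathcal{P}$.

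It then remains to show $\bigcup \mathcal{U}$ is dense in $O$. I would argue by contradiction: if it is not dense, then there is a nonempty open set $W \subseteq O$ (so $W \in \tau$) with $W \cap \bigcup \mathcal{U} = \phi$. Apply $f$ to get $f(W) \in \tau$ with $f(W) \subseteq W$. Since $f(W) \subseteq W$ and $W$ misses every member of $\mathcal{U}$, the set $f(W)$ is disjoint from each member of $\mathcal{U}$; moreover $f(W)$ is nonempty, so $f(W) \notin \mathcal{U}$ (any member of $\mathcal{U}$ is contained in $\bigcup \mathcal{U}$, which $f(W)$ is not). Therefore $\mathcal{U} \cup \{ f(W) \}$ is a strictly larger element of $\mathcal{P}$, contradicting the maximality of $\mathcal{U}$. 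This forces $\bigcup \mathcal{U}$ to meet every nonempty open subset of $O$, i.e.\ $\bigcup \mathcal{U}$ is dense in $O$, completing the proof.

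I do not expect a serious obstacle here; the only points requiring care are bookkeeping ones. First, one must keep track that the members of $\mathcal{U}$ lie in the image $f(\tau)$ and not merely in $\tau$ — this is automatic from how $\mathcal{P}$ is defined, but it is the content of the lemma's conclusion and should not be glossed over. Second, the verification that disjointness passes to unions of chains must invoke that any two members of the union already sit inside a single member of the chain (this uses that $\mathcal{C}$ is totally ordered). Third, the contradiction step should explicitly note both that $f(W) \ne \phi$ (so it is a legitimate element of $\tau$) and that it is genuinely new, so the extension $\mathcal{U} \cup \{f(W)\}$ is strictly larger. A fully constructive alternative via transfinite recursion on the open sets of $O$ is possible but messier, so I would present the Zorn's Lemma version.
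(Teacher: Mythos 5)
Your argument is correct. Note that the paper states this lemma without proof, so there is nothing to compare against; your write-up actually fills a gap in the exposition. The route you chose --- apply Zorn's Lemma to the collection of pairwise disjoint subfamilies of $f(\tau)$ ordered by inclusion, then show that a maximal such family $\mathcal{U}$ must have dense union because otherwise some nonempty open $W \subseteq O$ missing $\bigcup \mathcal{U}$ would yield $f(W) \subseteq W$, a nonempty set disjoint from every member of $\mathcal{U}$ and hence a strict enlargement --- is the standard proof of this fact, and each of the bookkeeping points you flag (members lie in $f(\tau)$, unions of chains remain pairwise disjoint, $f(W)$ is nonempty and genuinely new) is handled correctly. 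The only step you leave implicit is the equivalence between ``$\bigcup \mathcal{U}$ is dense in $O$'' and ``$\bigcup \mathcal{U}$ meets every nonempty open subset of $O$,'' which is immediate from the definition of closure and does not need elaboration.
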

\vspace{1pt}

\begin{theorem}
Let $BM(X)$ be a Banach-Mazur game on a topological space $X$. If $X$ is a Baire space, then Alice has no winning strategy in the game $BM(X)$.
\end{theorem}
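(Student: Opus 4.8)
\medskip
\noindent\textbf{Proof idea.}
The plan is to argue by contradiction. Suppose $\sigma$ is a winning strategy for Alice; I will use $\sigma$ to manufacture a single infinite play of $BM(X)$ that is consistent with $\sigma$ yet won by Bob, which is absurd. The two ingredients are the Lemma above, invoked once at every node of a suitable tree of partial plays, and the hypothesis that $X$ is a Baire space, invoked to intersect a countable family of dense open sets.

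First I would build a tree of partial plays. Its root carries the empty partial play together with the open set $X$. A node at level $n$ carries a $\sigma$-consistent partial play $p=(U_0,V_0,\dots,U_{n-1},V_{n-1})$ together with the open set $W:=V_{n-1}$ (for the root, $W:=X$). At such a node, sending a legal next Bob-move $U$, i.e.\ a nonempty open $U\subseteq W$, to Alice's prescribed reply $\sigma(U_0,\dots,U_{n-1},U)$ defines a function from the nonempty open subsets of $W$ into themselves that shrinks every argument; the Lemma, applied with $O=W$, therefore yields a \emph{pairwise disjoint} family of such replies whose union is dense in $W$. Declare these replies the children of the node, each child carrying the partial play $p$ extended by the corresponding Bob-move together with that reply. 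Two immediate observations: every set carried by a node is open, and since the children of a node are pairwise disjoint, an easy induction on levels shows that distinct nodes of the same level carry disjoint sets.

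Next I would set $G_n$ equal to the union of all the (open) sets carried by level-$n$ nodes. A short induction shows each $G_n$ is dense in $X$: $G_0=X$, and since each node's children have union dense in the set it carries, one gets $G_n\subseteq\overline{G_{n+1}}$, so $\overline{G_n}=X$ forces $\overline{G_{n+1}}=X$. As $X$ is a Baire space, $\bigcap_{n}G_n$ is dense, in particular nonempty; fix $x$ in it. For each $n$, $x$ belongs to the set carried by exactly one level-$n$ node, say $s_n$ (existence since $x\in G_n$, uniqueness by the disjointness just noted), and uniqueness forces $s_{n+1}$ to be a child of $s_n$, so the $s_n$ form a single branch of the tree. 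The partial plays carried by $s_0,s_1,s_2,\dots$ are nested, so their union is a bona fide infinite play of $BM(X)$; it is consistent with $\sigma$ because every one of Alice's moves in it was dictated by $\sigma$; and every one of Alice's moves in it contains $x$, whence the intersection of Alice's moves contains $x$ and is nonempty. Thus Bob wins this $\sigma$-play — the desired contradiction, so Alice has no winning strategy.

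I expect the genuine work to be bookkeeping rather than conceptual: one must check that the sequence of moves read off along the branch really does satisfy the strictly decreasing nesting $U_0\supseteq V_0\supseteq U_1\supseteq\cdots$ that the rules of $BM(X)$ demand, and that the family $\{G_n\}_{n}$ is countable so that the Baire hypothesis applies. Where the hypotheses do essential work is transparent: the Baire property is precisely what forces $\bigcap_nG_n\neq\emptyset$, and the disjointness clause in the Lemma is precisely what lets the single point $x$ pick out one branch of the tree — and hence one play — rather than merely an infinite, possibly infinitely branching, subtree.
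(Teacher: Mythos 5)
Your proposal is correct and follows essentially the same route as the paper's own proof: iterate the Lemma at each stage to produce pairwise disjoint families of Alice's $\sigma$-replies with dense unions, apply the Baire property to the resulting countable family of dense open sets, and use disjointness to extract from a common point a single $\sigma$-consistent play whose intersection is nonempty. The only cosmetic differences are that you organize the construction explicitly as a tree rooted at $X$ (rather than at Alice's first reply inside $U_0$) and phrase the argument as a contradiction rather than as a direct demonstration that an arbitrary strategy fails.
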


\begin{proof}
Suppose that $X$ is a Baire space. Let $\sigma$ be a strategy for Alice. We show that $\sigma$ cannot be a winning strategy.

Let $U_0=\sigma(\phi)$ be the first move. For each open $V_0 \subset U_0$, $\sigma(V_0) \subset V_0$. By applying Lemma $3.2$ we obtain a disjoint collection $\mathcal{U}_0$ consisting of open sets of the form $\sigma(V_0)$ such that $\bigcup \mathcal{U}_0$ is dense in $U_0$.

For each $W=\sigma(V_0) \in \mathcal{U}_0$, we have $\sigma(V_0,V_1) \subset V_1$ for all open $V_1 \subset W$. So the function $\sigma(V_0, \cdot)$ is like the function $f$ in Lemma $3.2$. We can then apply Lemma $3.2$ to obtain a disjoint collection $\mathcal{U}_1(W)$ consisting of open sets of the form $\sigma(V_0, V_1)$ such that $\bigcup \mathcal{U}_1(W)$ is dense in $W$. Then let $\mathcal{U}_1=\bigcup_{W \in \mathcal{U}_0} \mathcal{U}_1(W)$. Based on how $\mathcal{U}_1(W)$ are obtained, it follows that $\bigcup \mathcal{U}_1$ is dense in $U_0$.

For each $n$, let $O_n=\bigcup \mathcal{U}_n$. Each $O_n$ is dense open in $U_0$. Since $X$ is a Baire space, every nonempty open subset of $X$ is of second category in $X$ (including $U_0$). Thus $\bigcap \limits_{n=0}^\infty O_n \ne \phi$. From this nonempty intersection, since the collection $\mathcal{U}_n$ are disjoint, we can extract a play of the game such that the open sets in this play of the game have one point in common, and thus Bob wins.\\
Thus the strategy $\sigma$ is not a winning strategy for Alice.
\end{proof}

\end{document}